\numberwithin{equation}{section} 
\titleformat{\subsection}[runin]{\normalsize\bfseries}{\thesubsection}{5pt}{}
\newcommand{\moment}{
\textup{\textbf{J}}
}
\author{Mathieu Molitor
\\
\it \small{Departamento de Matem\'{a}tica, Universidade Federal da Bahia}\\
\it \small{Av. Adhemar de Barros, S/N, Ondina, 40170-110 Salvador, BA, Brazil}\\ 
\small{\it{e-mail:}}\,\,\url{pergame.mathieu@gmail.com}}
\title{Moment polytopes of toric exponential families \\
}
\date{}
\begin{document}

\theoremstyle{definition}
\newtheorem{lemma}{Lemma}[section]
\newtheorem{definition}[lemma]{Definition}
\newtheorem{proposition}[lemma]{Proposition}
\newtheorem{corollary}[lemma]{Corollary}
\newtheorem{theorem}[lemma]{Theorem}
\newtheorem{remark}[lemma]{Remark}
\newtheorem{example}[lemma]{Example}
\newtheorem{setting}[lemma]{Setting}
\bibliographystyle{alpha}

\maketitle 


\begin{abstract}
	We show that the moment polytope of a K\"{a}hler toric manifold, 
	constructed as the torification (in the sense of \cite{molitor-toric}) 
	of an exponential family defined on a finite sample space, is the projection 
	of a higher-dimensional simplex.
\end{abstract}

\section{Introduction}

	Let $\mathcal{E}=\{p_{\theta}:\Omega\to \mathbb{R}\,\,\vert\,\, \theta\in \mathbb{R}^{n}\}$ 
	be an $n$-dimensional exponential family defined on a finite sample space $\Omega=\{x_{0},...,x_{m}\}$ 
	endowed with the counting measure. We assume there exist functions $C,F_{1},...,F_{n}\,:\,\Omega\to \mathbb{R}$ 
	such that $p_{\theta}(x)=\textup{exp}\big\{C(x)+F_{1}(x)\theta_{1}+...+F_{n}(x)\theta_{n}-\psi(\theta)\big\}$ 
	for all $x\in \Omega$ and all $\theta=(\theta_{1},...,\theta_{n})\in \mathbb{R}^{n}$. 
	The function $\psi:\mathbb{R}^{n}\to \mathbb{R}$, known as the \textit{log-partition function}, 
	is determined by the normalization condition $\sum_{x\in \Omega}p_{\theta}(x)=1$. 
	We further assume that the functions $1,F_{1},...,F_{n}$ are linearly independent, ensuring 
	that the map $\mathbb{R}^{n}\to \mathcal{E}$, $\theta\mapsto p_{\theta}$ is bijective.

	Let $h_{F}$ and $\nabla^{(e)}$ be the Fisher metric and exponential connection on $\mathcal{E}$, respectively. 
	It is well-known that $(\mathcal{E},h_{F},\nabla^{(e)})$ is a dually flat manifold \cite{Amari-Nagaoka}. 
	The central assumption of this paper is that $(\mathcal{E},h_{F},\nabla^{(e)})$ is toric in the sense 
	of \cite{molitor-toric} (see Section \ref{nknkneknknfwknk}). 
	This means that we can uniquely associate to $(\mathcal{E},h_{F},\nabla^{(e)})$ 
	a connected, complete and simply connected 
	K\"{a}hler manifold $N$ with real analytic K\"{a}hler metric. This manifold $N$, called \textit{torification} 
	of $(\mathcal{E},h_{F},\nabla^{(e)})$, is endowed with an effective holomorphic and Hamiltonian 
	torus action $\Phi:\mathbb{T}^{n}\times N\to N$ with momentum map 
	$\moment:N\to \mathbb{R}^{n}$ (here $\mathbb{T}^{n}=\mathbb{R}^{n}/\mathbb{Z}^{n}$).

	The main result of this paper is a description of the set $\moment(N)\subset \mathbb{R}^{n}$.
	This object, known as the moment polytope in symplectic geometry, 
	is important in connection with the convexity results of Atiyah \cite{Atiyah} and 
	Guillemin and Sternberg \cite{Guillemin82}, and Delzant's correspondence \cite{Delzant}. 
	To describe our result, we first note that if $S$ is a family of 
	probability functions defined on $\Omega$ (like $\mathcal{E}$), 
	then the convex hull of $S$ within the vector space of real-valued functions on 
	$\Omega$, denoted by $\textup{conv}(S)$, is also formed by probability functions. 
	In particular, $\textup{conv}(S)$ can be regarded as a subset of 
	the hyperplane $H=\{(x_{1},...,x_{m+1})\in \mathbb{R}^{m+1}\,\,\vert\,\, 
	x_{1}+...+x_{m+1}=1\}\subset \mathbb{R}^{m+1}$. We shall say that $S$ is 
	\textit{full} if the topological interior of $\textup{conv}(S)$, regarded as a subspace of $H$, 
	is nonempty. 
\begin{theorem}\label{nfekwnkefnk}
	Suppose $\mathcal{E}$ is full and $\moment:N\to \mathbb{R}^{n}$ is closed 
	(meaning that the image of every closed set is closed). 
	Then there exist an $n\times m$ matrix $T$ with integer entries and 
	$C\in \mathbb{R}^{n}$ such that 
	\begin{eqnarray*}
		\moment(N)=-4\pi T(\Delta_{m})+C, 
	\end{eqnarray*}
	where $\Delta_{m}$ is the $m$-simplex given by $\{(x_{1},...,x_{m})\in \mathbb{R}^{m}\,\,\vert\,\, x_{k}\geq 0\,\,
	\textup{for all $k$ and}\,x_{1}+...+x_{m}\leq 1\}$. 
\end{theorem}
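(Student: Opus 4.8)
The plan is to reduce the statement to a classical description of the expectation parameters of $\mathcal{E}$ and then to extract integrality from the toricity hypothesis. I would first recall from \cite{molitor-toric} (see Section \ref{nknkneknknfwknk}) the explicit shape of the torification $N$ and of its momentum map. The construction equips $N$ with an open dense orbit on which the radial directions are parametrized by $\theta$ (i.e.\ by $\mathcal{E}$ itself) and the angular directions by $\mathbb{T}^n$, and on this orbit $\moment$ depends only on the radial part, being an affine function of the expectation coordinates
\[
\eta_i(\theta)=\frac{\partial\psi}{\partial\theta_i}=\sum_{x\in\Omega}p_\theta(x)F_i(x).
\]
The Fisher-metric normalization built into $N$ gives precisely $\moment=-4\pi\,\eta+c$ on this orbit, for a fixed $c\in\mathbb{R}^n$; this is the only place the constant $-4\pi$ enters.

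Next I would compute the closure of the image of this orbit. Writing $F(x_k)=(F_1(x_k),\dots,F_n(x_k))\in\mathbb{R}^n$ for the feature vector of the $k$-th sample point, the standard theory of minimal exponential families over a finite space \cite{Amari-Nagaoka} identifies $\eta(\mathbb{R}^n)$ with the relative interior of $\textup{conv}\{F(x_0),\dots,F(x_m)\}$. Moreover the linear independence of $1,F_1,\dots,F_n$ forces the only affine functional vanishing on all the $F(x_k)$ to be zero, so these vectors affinely span $\mathbb{R}^n$ and the hull is $n$-dimensional; its closure is therefore the full polytope $\textup{conv}\{F(x_0),\dots,F(x_m)\}$. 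Since the open orbit is dense in $N$ and $\moment$ is assumed closed, $\moment(N)$ is closed and equals the closure of the image of the orbit, whence
\[
\moment(N)=-4\pi\,\textup{conv}\{F(x_0),\dots,F(x_m)\}+c.
\]
Here the hypotheses combine as expected: closedness is what legitimizes passing from the dense orbit to all of $N$ despite the possible noncompactness, while fullness of $\mathcal{E}$ is the condition under which the construction of \cite{molitor-toric} assembles $N$ from all $m+1$ sample points, so that each $F(x_k)$ genuinely contributes to the polytope.

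It then remains to realize this hull as an affine image of $\Delta_m$. Parametrizing a convex combination $\sum_k\lambda_k F(x_k)$ by its last $m$ barycentric coordinates $(\lambda_1,\dots,\lambda_m)\in\Delta_m$ and eliminating $\lambda_0$ gives
\[
\textup{conv}\{F(x_0),\dots,F(x_m)\}=F(x_0)+T(\Delta_m),
\]
where $T$ is the $n\times m$ matrix whose $k$-th column is $F(x_k)-F(x_0)$. Setting $C=c-4\pi F(x_0)$ turns the previous display into the asserted identity $\moment(N)=-4\pi T(\Delta_m)+C$, provided $T$ has integer entries.

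The main obstacle is exactly this integrality of $T$, i.e.\ that each difference $F_i(x_k)-F_i(x_0)$ is an integer; it cannot come from the analytic picture and is precisely where the toricity assumption is indispensable. The torus acting on $N$ is $\mathbb{T}^n=\mathbb{R}^n/\mathbb{Z}^n$ with its \emph{standard} lattice, and by \cite{molitor-toric} the torification exists only when the angular coordinates of the dually flat structure are $\mathbb{Z}^n$-periodic. Unwinding the definition of ``toric'' from Section \ref{nknkneknknfwknk}, I expect this periodicity to hold if and only if $F(x_k)-F(x_0)\in\mathbb{Z}^n$ for every $k$, giving $T\in M_{n\times m}(\mathbb{Z})$ directly; geometrically this is the Delzant-type condition \cite{Delzant,Atiyah,Guillemin82} that the edge vectors $-4\pi(F(x_k)-F(x_0))$ of $\moment(N)$ at the vertex $C=c-4\pi F(x_0)$ lie in the weight lattice $4\pi\mathbb{Z}^n$. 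The two delicate points I anticipate are identifying the periodicity lattice furnished by the construction of \cite{molitor-toric} with the standard $\mathbb{Z}^n$, and checking that the orbit-closure step survives noncompactness---this last being exactly what the closedness hypothesis on $\moment$ is designed to guarantee.
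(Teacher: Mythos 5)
Your first two steps (identifying $\eta(\mathbb{R}^n)$ with the interior of the marginal polytope and passing to the closure using density of the open orbit and closedness of $\moment$) are sound and parallel the paper's Lemma \ref{ndkknendkdnknk}. The gap lies in the two claims that carry all of the arithmetic content, and both are false as stated. The torification construction does \emph{not} give $\moment=-4\pi\,\eta+c$ on the open orbit; what it gives (formula \eqref{nfekdckdkjnwkenfk}) is $\moment=A\big(\mathbb{E}_{\kappa(\cdot)}(F_{1}),\dots,\mathbb{E}_{\kappa(\cdot)}(F_{n})\big)$ for \emph{some} bijective affine map $A$, whose linear part is tied to the parallel lattice in $T\mathcal{E}$, not to the chosen sufficient statistics. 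No formula with the universal constant $-4\pi$ in front of $\eta$ can hold for an arbitrary admissible choice of the $F_{i}$, because the $F_{i}$ are determined by $\mathcal{E}$ only up to affine substitution. Concretely, replace $F$ by $\tilde F=F/2$ (and $\theta$ by $2\theta$) in the binomial example: the set $\mathcal{E}$, its dually flat structure, its torification $N=\mathbb{P}_{1}(\tfrac{1}{n})$ and $\moment$ are all unchanged, but $\tilde\eta$ ranges over $(0,n/2)$ while $\moment(N)=[-4\pi n,0]$ has length $4\pi n$, so $\moment\neq-4\pi\tilde\eta+c$. The same substitution kills your integrality claim: $\tilde F(x_{k})-\tilde F(x_{0})=k/2\notin\mathbb{Z}$ for odd $k$, yet the family is still toric and full. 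So toricity cannot be ``unwound'' into $F(x_{k})-F(x_{0})\in\mathbb{Z}^{n}$: the lattice sits in $T\mathcal{E}$ and is related to the frame $(\partial/\partial\theta_{i})$ by an unknown invertible \emph{real} matrix, and this step --- which you yourself flag as an expectation rather than a proof --- is exactly the missing idea.

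The paper's mechanism for integrality is different: the lifting procedure of \cite{molitor-toric} produces a holomorphic, isometric immersion $f:N\to\mathbb{P}^{m}$ that is equivariant along a Lie group homomorphism $\rho:\mathbb{T}^{n}\to\mathbb{T}^{m}$, and $T$ is defined as the transpose of $\rho_{*_{e}}$, which has integer entries because every homomorphism of tori has integral derivative. In particular $T$ is not the matrix of differences $F(x_{k})-F(x_{0})$; rather, applying Lemma \ref{ncdnkefnkenkn} to point masses shows that the columns of $-4\pi T$ are the images of the differences $F(x_{k})-F(x_{m})$ under the linear part of $A$, and it is this combination that is integral (times $4\pi$), not the raw differences. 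The proof then runs: $T\circ\moment'\circ f$ is a momentum map for $\Phi$, hence equals $\moment-C$ (Lemma \ref{nfeknwkenfk}); the identity $(T\circ\alpha)(p)+C=A(\mathbb{E}_{p}(F_{1}),\dots,\mathbb{E}_{p}(F_{n}))$ holds on $\textup{conv}(\mathcal{E})$ and extends to all of $\mathcal{P}(\Omega)$ by continuity and real-analyticity --- this extension is where fullness actually enters --- giving $-4\pi T(\Delta_{m})+C=A(\mathcal{M})=\moment(N)$. Note that in your argument fullness does no mathematical work at all (your gloss on it is not a statement one can prove), which is itself a symptom that the route cannot be right: without the extension step, the full simplex $\Delta_{m}$ never appears. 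To repair your proof you would need either to invoke the equivariant immersion into $\mathbb{P}^{m}$, or to prove directly that the linear part of $A$ maps each $F(x_{k})-F(x_{m})$ into $4\pi\mathbb{Z}^{n}$ --- which is the same fact in disguise.
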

	The theorem imposes strong constraints on the shape of the polytope $\moment(N).$ 
	For instance, the vertices of $\moment(N)$ are necessarily images of 
	vertices of $\Delta_{m}$ under the affine map $-4\pi T+C$, which leads to the following result.
\begin{corollary}
	The difference of two vertices of $\moment(N)$ lies in $4\pi \mathbb{Z}^{n}$. 
\end{corollary}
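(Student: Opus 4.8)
The plan is to exploit the explicit description of $\moment(N)$ furnished by Theorem \ref{nfekwnkefnk}, together with the remark immediately preceding the corollary: every vertex of $\moment(N)$ is the image of a vertex of $\Delta_{m}$ under the affine map $\phi:\mathbb{R}^{m}\to\mathbb{R}^{n}$ given by $\phi(x)=-4\pi Tx+C$. The whole argument reduces to a lattice computation once this is in place.

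First I would record the vertices of $\Delta_{m}$. Directly from its description $\Delta_{m}=\{(x_{1},\dots,x_{m})\mid x_{k}\geq 0 \text{ and } x_{1}+\cdots+x_{m}\leq 1\}$, the vertex set is $\{0,e_{1},\dots,e_{m}\}$, where $0$ is the origin and $e_{1},\dots,e_{m}$ is the standard basis of $\mathbb{R}^{m}$. In particular all $m+1$ vertices lie in the integer lattice $\mathbb{Z}^{m}$, so the difference of any two of them again lies in $\mathbb{Z}^{m}$.

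Next, given two vertices $v,v'$ of $\moment(N)$, I would write $v=\phi(w)$ and $v'=\phi(w')$ for suitable vertices $w,w'$ of $\Delta_{m}$, as permitted by the preceding remark. Subtracting, the constant term $C$ cancels and only the linear part survives:
\[
v-v'=\phi(w)-\phi(w')=-4\pi T(w-w').
\]
Since $w-w'\in\mathbb{Z}^{m}$ by the previous step and $T$ has integer entries, the vector $T(w-w')$ lies in $\mathbb{Z}^{n}$; hence $v-v'\in -4\pi\mathbb{Z}^{n}=4\pi\mathbb{Z}^{n}$, which is exactly the claim.

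The computation itself is immediate, so the only point deserving care is the assertion that every vertex of $\moment(N)$ arises as $\phi$ applied to a vertex of $\Delta_{m}$. This is a standard fact of convex geometry—an affine image of a polytope is the convex hull of the images of the vertices, and its extreme points form a subset of those images—and I would either cite it or supply the one-line argument that an extreme point of $\phi(\Delta_{m})$ cannot be a nontrivial convex combination of the points $\phi(0),\phi(e_{1}),\dots,\phi(e_{m})$, forcing it to coincide with one of them. I anticipate no further obstacle.
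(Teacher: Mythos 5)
Your proof is correct and follows exactly the route the paper indicates: vertices of $\moment(N)$ are images of the lattice vertices $\{0,e_{1},\dots,e_{m}\}$ of $\Delta_{m}$ under the affine map $-4\pi T+C$, the constant cancels upon subtraction, and integrality of $T$ (the paper's lemma) gives the conclusion. You also supply the one convex-geometry fact the paper leaves implicit (extreme points of an affine image of a polytope come from vertices of the source), which is a welcome bit of added rigor but not a different approach.
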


\begin{example}\label{nekdnwknfkdn}
	Let $\mathbb{P}_{1}(c)$ be the complex projective space of complex dimension $1$ 
	and holomorphic sectional curvature $c>0$, equipped with the torus action 
	$[t]\cdot [z_{0},z_{1}]=[e^{2i\pi t}z_{0},z_{1}]$ (homogeneous coordinates). This action is 
	Hamiltonian with momentum map $\moment_{c}:\mathbb{P}_{1}(c)\to \mathbb{R}$ given 
	by $\moment_{c}([z_{0},z_{1}])=-\tfrac{4\pi}{c} \tfrac{|z_{0}|^{2}}{|z_{0}|^{2}+|z_{1}|^{2}}$. 
	The moment polytope is the segment $[-\tfrac{4\pi}{c},0]$, whose vertices 
	are $-\tfrac{4\pi}{c}$ and $0$. The difference of these vertices lies in $4\pi\mathbb{Z}$ 
	if and only if $c=\tfrac{1}{n}$ for some integer $n\geq 1$. Therefore, for 
	$\mathbb{P}_{1}(c)$ to be realized as the torification of a full exponential family, it is 
	necessary that $c$ is of the form $c=\tfrac{1}{n}$ for some integer $n\geq 1$. 
	This occurs, for example, when the exponential family consists of all binomial distributions 
	$p(k)=\binom{n}{k}q^{k}(1-q)^{n-k}$, $q\in (0, 1)$, defined over the sample space $\Omega=\{0, 1, ..., n\}$, 
	which corresponds to $\mathbb{P}_{1}(\tfrac{1}{n})$ (see \cite{molitor-toric}). 
\end{example}	

	The representation of the moment polytope $\moment(N)$ as a projection of a higher-dimensional 
	simplex has been observed in the context of algebraic geometry and toric varieties arising from fans 
	(see, e.g., \cite{Audin}, Section VII.2.c). This algebraic approach relies on lifting fan morphisms 
	to analytic maps between toric varieties, which, under suitable conditions, allows a toric variety 
	to be analytically mapped into a complex projective space $\mathbb{P}^{m}$, 
	whose moment polytope is precisely the standard simplex $\Delta_{m}$. 
	Our approach follows a similar path and uses 
	a lifting procedure for affine isometric maps between toric dually flat manifolds, 
	developed in \cite{molitor-toric}. This lifting procedure implies that the 
	torification $N$ of an exponential family $\mathcal{E}$ defined on a finite sample space $\Omega$ can be 
	immersed holomorphically, isometrically and equivariantly into a complex projective space, 
	similarly to the algebraic case. A significant departure from the algebraic proof, however, 
	is our utilization of the properties of the marginal polytope of $\mathcal{E}$ (see Section \ref{ncekndwknkn}), 
	which essentially allows us to replace $\mathcal{E}$ with the set $\mathcal{P}(\Omega)$ of all probability 
	functions on $\Omega$ (see Lemma \ref{ndkknendkdnknk}), simplifying our analysis.
	
	To provide the necessary context, Section \ref{nknkneknknfwknk} briefly reviews the 
	torification construction, and Section \ref{ncekndwknkn} presents the proof.
	To clarify our notation, we define a Lie group action $G\times M\to M$ on a symplectic manifold 
	$(M,\omega)$ to be \textit{Hamiltonian} if the action preserves $\omega$ and if 
	there is a map $\moment:M\to \mathfrak{g}^{*}=\textup{Lie}(G)^{*}$, 
	called \textit{momentum map}, that is equivariant with respect to $\Phi$ and the coadjoint action, 
	and such that $\omega(\xi_{M},u)=d\moment^{\xi}(u)$ 
	for all $\xi\in \mathfrak{g}$ and $u\in TM$. Here $\xi_{M}(p)=\tfrac{d}{dt}\big\vert_{0}\Phi(\textup{exp}(t\xi),p)$ 
	is the fundamental vector field associated to $\xi$ and $\moment^{\xi}:M\to \mathbb{R}$ is the function defined by 
	$\moment^{\xi}(p)=\moment(p)(\xi)$. If $G=\mathbb{T}^{n}=\mathbb{R}^{n}/\mathbb{Z}^{n}$ is an $n$-dimensional 
	real torus, then we identify the Lie algebra of $\mathbb{T}^{n}$ with $\mathbb{R}^{n}$ via the derivative at 
	zero of the quotient map $\mathbb{R}^{n}\to \mathbb{R}^{n}/\mathbb{Z}^{n}$. We identify 
	$\mathbb{R}^{n}$ with its dual via the Euclidean metric. Thus, we regard a momentum 
	map for a torus action $\mathbb{T}^{n}\times M\to M$ as a map from $M$ to $\mathbb{R}^{n}$. 
%
%
%
%
%
%
%
%

\section{Torification of dually flat manifolds}\label{nknkneknknfwknk}
	In this section, we briefly review the concept of torification, which is used throughout this paper. This concept 
	is a combination of two ingredients: (1) \textit{Dombrowski's construction}, 
	which implies that the tangent bundle of a dually flat manifold is naturally a K\"{a}hler manifold \cite{Dombrowski}, 
	and (2) \textit{parallel lattices}, which are used to implement torus actions. 
	The material is mostly taken from \cite{molitor-toric}. 

\subsection{Dombrowski's construction.}\label{nkwwnkwnknknfkn}
	Let $M$	be a connected manifold of dimension $n$, endowed with a Riemannian metric $h$ 
	and affine connection $\nabla$ ($\nabla$ is not necessarily 
	the Levi-Civita connection). The \textit{dual connection} of $\nabla$, denoted by $\nabla^{*}$, is 
	the only connection satisfying $X(h(Y,Z))=h(\nabla_{X}Y,Z)+h(Y,\nabla^{*}_{X}Z)$ for all vector fields $X,Y,Z$ on $M$. 
	When both $\nabla$ and $\nabla^{*}$ are flat (i.e., the curvature tensor and torsion are zero), 
	we say that the triple $(M,h,\nabla)$ is a \textit{dually flat manifold}. 

	Let $\pi:TM\to M$ denote the canonical projection. Given a local coordinate system $(x_{1},...,x_{n})$ 
	on $U\subseteq M$, we can define a coordinate system $(q,r)=(q_{1},...,q_{n},r_{1},...,r_{n})$ 
	on $\pi^{-1}(U)\subseteq TM$ by letting $(q,r)(\sum_{j=1}^{n}a_{j}\tfrac{\partial}{\partial x_{j}}\big\vert_{p})=
	(x_{1}(p),...,x_{n}(p),a_{1},...,a_{n})$, where $p\in M$ and $a_{1},...,a_{n}\in \mathbb{R}$. 
	Write $(z_{1},...,z_{n})=(q_{1}+ir_{1},...,q_{n}+ir_{n})$, where $i=\sqrt{-1}$. When $\nabla$ is flat, Dombrowski 
	\cite{Dombrowski} showed that the family of complex coordinate systems $(z_{1},...,z_{n})$ 
	on $TM$ (obtained from affine coordinates on $M$) form a holomorphic atlas on $TM$. Thus, when $\nabla$ is flat, 
	$TM$ is naturally a complex manifold. If in addition $\nabla^{*}$ is flat, then $TM$ has a natural 
	K\"{a}hler metric $g$ whose local expression in the coordinates $(q,r)$ is given by $g(q,r)=
	\big[\begin{smallmatrix}
		h(x)  &  0\\
		0     &   h(x)
	\end{smallmatrix}
	\big]$, where $h(x)$ is the matrix representation of $h$ in the affine coordinates $x=(x_{1},...,x_{n})$. 
	It follows that the tangent bundle of a dually flat manifold is naturally a K\"{a}hler manifold. In this paper, 
	we will refer to this K\"{a}hler structure as the \textit{K\"{a}hler structure associated to Dombrowski's 
	construction}. 

\subsection{Parallel lattices.}\label{nekwnkefkwnkk}

	Let $(M,h,\nabla)$ be a connected dually flat manifold of dimension $n$. 
	Recall that a \textit{frame} on $M$ is an ordered $n$-tuple $(E_{1},...,E_{n})$ of vector fields $E_{i}$
	on $M$ such that $E_{1}(p),...,E_{n}(p)$ form a basis for $T_{p}M$ for each $p\in M$. 
	We will often use the notations $E$ and $(E_{i})$ interchangeably to denote a frame $(E_{1},...,E_{n})$.
	Let $\textup{Fr}(M)$ be the set of frames on $M$, and let $\textup{Fr}(M,\nabla)$ be the subset of 
	$\textup{Fr}(M)$ consisting of frames $(E_{1},...,E_{n})$ 
	such that $E_{i}$ is parallel with respect to $\nabla$ for all $i=1,...,n$. 

	A set $\mathcal{L}\subset TM$ is said to be a \textbf{parallel lattice} with respect to $\nabla$ if there is 
	a frame $E\in \textup{Fr}(M,\nabla)$ such that $\mathcal{L}=\mathcal{L}(E)$, where $\mathcal{L}(E)\overset{\textup{def}}{=}
	\{k_{1}E_{1}(p)+...+k_{n}E_{n}(p)\,\,\vert\,\,p\in M,\,\,k_{1},...,k_{n}\in \mathbb{Z}\}\subset TM.$
	In this case, we say that $\mathcal{L}$ is \textit{generated} by $E$, and that $E$ is a 
	\textit{generator} for $\mathcal{L}$. 

	Given a parallel lattice $\mathcal{L}\subset TM$ with respect to $\nabla$ generated by $E=(E_{i})$,
	we will denote by $\Gamma(\mathcal{L})$ the set of transformations of $TM$ of the form 
	$u\mapsto u+k_{1}E_{1}+...+k_{n}E_{n}$, where $u\in TM$ and $k_{1},...,k_{n}\in \mathbb{Z}$. The group
	$\Gamma(\mathcal{L})$ is independent of the choice of $E$ and is isomorphic to $\mathbb{Z}^{n}$. 
	If $\mathcal{L}\subset TM$ is a parallel lattice generated by $E=(E_{i})$, then 
	the map $M\times \mathbb{R}^{n}\to TM$, given by $(p,(u_{1},...,u_{n}))\mapsto \sum_{i=1}^{n}u_{i}E_{i}(p)$, 
	is a global trivialization. In therms of this trivialization, the action of 
	$\Gamma(\mathcal{L})\cong \mathbb{Z}^{n}$ on $TM$ is simply $k\cdot (p,u)=
	(p,u+k)$, where $k\in \mathbb{Z}^{n}$, $p\in M$ and $u\in \mathbb{R}^{n}$. Thus the action 
	of $\Gamma(\mathcal{L})$ on $TM$ is free and proper, and the orbit space 
	$TM/\Gamma(\mathcal{L})$ is a manifold diffeomorphic to the Cartesian product 
	$M\times \mathbb{T}^{n}$, where $\mathbb{T}^{n}=\mathbb{R}^{n}/\mathbb{Z}^{n}$ 
	is the real torus of dimension $n$. Because of this, we call the quotient manifold 
	$\textup{Tub}(\mathcal{L}):=TM/\Gamma(\mathcal{L})$ the \textbf{tube} associated to $\mathcal{L}$, 
	and call the corresponding projection $tub_{\mathcal{L}}:TM\to \textup{Tub}(\mathcal{L})$ the 
	\textbf{tubular map}. 

	The tubular map $tub_{\mathcal{L}}$ is a covering map whose Deck transformation group is $\Gamma(\mathcal{L})$. 
	If $\pi:TM\to M$ is the canonical projection, then $\pi\circ \gamma=\pi$ for every 
	$\gamma\in \Gamma(\mathcal{L})$ and hence there is a surjective submersion 
	$\pi_{\mathcal{L}}:\textup{Tub}(\mathcal{L})\to M$ such that $\pi= \pi_{\mathcal{L}}\circ tub_{\mathcal{L}}$. 
	
	Now, given a generator $E=(E_{i})$, we define an action $\Phi_{E}$ of the torus $\mathbb{T}^{n}$ on 
	$\textup{Tub}(\mathcal{L})$ by 
	\begin{eqnarray}\label{ncekdnwkefnekn}
		\Phi_{E}([t],tub_{\mathcal{L}}(u)\big)= tub_{\mathcal{L}}\big(u+t_{1}E_{1}+...+t_{n}E_{n}\big),
	\end{eqnarray}
	where $[t]$ denotes the equivalence class of 
	$t=(t_{1},...,t_{n})\in \mathbb{R}^{n}$ in $\mathbb{T}^{n}=\mathbb{R}^{n}/\mathbb{Z}^{n}$. 
	The map $\pi_{\mathcal{L}}:\textup{Tub}(\mathcal{L})\to M$, together with the torus action $\Phi_{E}$, is then 
	a left principal fiber bundle over $M$ with structure group $\mathbb{T}^{n}$. 
	
	Analytically, the tube $\textup{Tub}(\mathcal{L})=TM/\Gamma(\mathcal{L})$ inherits a K\"{a}hler manifold structure 
	from the K\"{a}hler structure on $TM$ derived from Dombrowski's construction.
	This is due to the fact that each $\gamma\in \Gamma(\mathcal{L})$ is a holomorphic isometry.
	Consequently, the tubular map becomes a K\"{a}hler covering map, meaning it is a holomorphic and isometric covering map.
	Furthermore, the projection $\pi_{\mathcal{L}}:\textup{Tub}(\mathcal{L})\to M$ is a Riemannian submersion, 
	and for every $a\in \mathbb{T}^{n}$, the corresponding transformation 
	$(\Phi_{E})_{a}:\textup{Tub}(\mathcal{L})\to \textup{Tub}(\mathcal{L})$ is a holomorphic isometry. 

\subsection{Torification.}\label{nfkenkfejdefdknkn} 
	Let $(M,h,\nabla)$ be a connected dually flat manifold of dimension $n$ and $N$ a connected K\"{a}hler 
	manifold of complex dimension $n$, equipped with an effective holomorphic and isometric torus 
	action $\Phi:\mathbb{T}^{n}\times N\to N$. Let $N^{\circ}$ denote the set of points $p\in N$ 
	where $\Phi$ is free, that is, $N^{\circ}=\{p\in N\,\,\big\vert\,\,\Phi(a,p)=e\,\,\Rightarrow\,\,a=e\}$. 
	It follows from \cite[Corollary B.48]{Guillemin} that $N^{\circ}$ is a $\mathbb{T}^{n}$-invariant connected open 
	dense subset of $N$.

	Given a parallel lattice $\mathcal{L}\subset TM$, we will say that a map $F:\textup{Tub}(\mathcal{L})\to N^{\circ}$ 
	is \textit{equivariant} if there exists a generator $E$ for $\mathcal{L}$ such that 
	$F\circ (\Phi_{E})_{a}=\Phi_{a}\circ F$ for all $a\in \mathbb{T}^{n}$ (see Section \ref{nekwnkefkwnkk} for the notation).
\begin{definition}[\textbf{Torification}]
	We shall say that $N$ is a \textit{torification} of $M$ 
	if there exist a parallel lattice $\mathcal{L}\subset TM$ with respect to 
	$\nabla$ and an equivariant holomorphic and isometric diffeomorphism $F:\textup{Tub}(\mathcal{L})\to N^{\circ}.$ 
\end{definition}

	By abuse of language, we will often say that the torus action $\Phi:\mathbb{T}^{n}\times N\to N$ is a torification of $M$. 
%

	We shall say that a K\"{a}hler manifold $N$ is \textbf{regular} if it is connected, 
	simply connected, complete and if the K\"{a}hler metric is real analytic. 
	A torification $\Phi:\mathbb{T}^{n}\times N\to N$ is said to be \textit{regular} if $N$ is regular. 
	In this paper, we primarily focus on regular torifications, because they are essentially unique, 
	as we explain below.
	
	Two torifications $\Phi:\mathbb{T}^{n}\times N\to N$ and $\Phi':\mathbb{T}^{n}\times N'\to N'$ 
	of the same connected dually flat manifold $(M,h,\nabla)$ are said to be \textit{equivalent} if there exists 
	a K\"{a}hler isomorphism $f:N\to N'$ and a Lie group isomorphism $\rho:\mathbb{T}^{n}\to \mathbb{T}^{n}$ such that 
	$f\circ \Phi_{a}=\Phi'_{\rho(a)}\circ f$ for all $a\in \mathbb{T}^{n}$.

\begin{theorem}[\textbf{Equivalence of regular torifications}]\label{newdnkekfwndknk}
	Regular torifications of a connected dually flat manifold $(M,h,\nabla)$ are equivalent. 
\end{theorem}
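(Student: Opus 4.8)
The plan is to construct an equivariant K\"ahler isomorphism $\phi:N\to N'$ by hand, by producing a germ of such an isomorphism over the free loci and then propagating it by analytic continuation. By the definition of torification there are parallel lattices $\mathcal{L},\mathcal{L}'\subset TM$, generators $E$ and $E'$, and equivariant holomorphic isometric diffeomorphisms $F:\textup{Tub}(\mathcal{L})\to N^{\circ}$ and $F':\textup{Tub}(\mathcal{L}')\to (N')^{\circ}$. The observation I would exploit is that, even though the tubes $\textup{Tub}(\mathcal{L})$ and $\textup{Tub}(\mathcal{L}')$ need not coincide, both are quotients of the \emph{same} K\"ahler manifold $TM$ (with its Dombrowski structure) by the deck groups $\Gamma(\mathcal{L})$ and $\Gamma(\mathcal{L}')$, and the tubular maps $tub_{\mathcal{L}}$ and $tub_{\mathcal{L}'}$ are \emph{local} K\"ahler isomorphisms. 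Hence the two tubes are indistinguishable as K\"ahler manifolds in the small.

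First I would build the germ. Choosing a nonempty connected open set $V\subseteq TM$ small enough that both $tub_{\mathcal{L}}$ and $tub_{\mathcal{L}'}$ restrict to diffeomorphisms onto their images, the composite $g:=F'\circ tub_{\mathcal{L}'}\circ(tub_{\mathcal{L}}\vert_{V})^{-1}\circ F^{-1}$ is a K\"ahler isomorphism from an open subset of $N^{\circ}\subseteq N$ onto an open subset of $(N')^{\circ}\subseteq N'$. Since $N$ and $N'$ are regular, I would extend $g$ to a global map $\phi:N\to N'$ by a standard analytic continuation argument: completeness of $N'$ guarantees that the continuation of the isometry germ along any path in $N$ never escapes, real analyticity makes the continuation unique, and simple connectivity of $N$ makes it single valued by the monodromy theorem. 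The resulting $\phi$ is a local isometry defined on the complete manifold $N$, hence a Riemannian covering onto the connected manifold $N'$; as $N'$ is simply connected it is a diffeomorphism. Because $g$ is holomorphic and the parallel complex structures are real analytic, $\phi$ preserves the complex structure on an open set and therefore everywhere, so $\phi$ is a K\"ahler isomorphism.

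It remains to produce the torus automorphism and the intertwining relation. Consider the conjugated action $\Psi_{a}:=\phi\circ\Phi_{a}\circ\phi^{-1}$ on $N'$; since $\Phi$ is a $\mathbb{T}^{n}$-action and $\phi$ is a K\"ahler isomorphism, $a\mapsto\Psi_{a}$ is again an action of $\mathbb{T}^{n}$ by holomorphic isometries. Reading it through $F'$ over the open set where $\phi=g$, one finds that on $(N')^{\circ}$ the map $\Psi_{a}$ is the transformation induced by the vertical translation of $TM$ by $a_{1}E_{1}+\dots+a_{n}E_{n}$. For integer $a=k\in\mathbb{Z}^{n}$ we have $\Phi_{k}=\textup{id}$, hence $\Psi_{k}=\textup{id}$, which forces $k_{1}E_{1}+\dots+k_{n}E_{n}\in\mathcal{L}'$ for all $k$; thus $\mathcal{L}\subseteq\mathcal{L}'$, and the symmetric argument applied to $\phi^{-1}$ gives $\mathcal{L}=\mathcal{L}'$. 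Consequently the constant change-of-frame matrix $A\in GL_{n}(\mathbb{R})$ relating the parallel frames $E$ and $E'$ lies in $GL_{n}(\mathbb{Z})$, so it defines a Lie group automorphism $\rho$ of $\mathbb{T}^{n}$, and the translation identity shows $\Psi_{a}=\Phi'_{\rho(a)}$ on an open set, hence everywhere by analyticity. This is exactly $\phi\circ\Phi_{a}=\Phi'_{\rho(a)}\circ\phi$, so the two torifications are equivalent.

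I expect the genuine difficulty to lie in the global extension: one must check rigorously that the isometry germ $g$ continues along every path of $N$ and closes up to a single valued map, and this is precisely where all three regularity hypotheses are indispensable---completeness to keep the continuation from running off the target, real analyticity to make it unique, and simple connectivity to kill the monodromy. Without them the ``filled in'' manifold, and with it the admissible lattice, would no longer be determined, so the rigidity step is both the technical heart of the proof and the place where the hypotheses are genuinely used.
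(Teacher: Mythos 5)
Your proposal is essentially correct, but note that this paper does not actually contain a proof of Theorem \ref{newdnkekfwndknk}: the result is quoted as review material from \cite{molitor-toric}, so there is no in-paper argument to compare against. Judged on its own merits, your strategy --- build a K\"ahler isomorphism germ by passing through the common cover $TM$ (both tubular maps are K\"ahler covering maps, as stated in Section \ref{nekwnkefkwnkk}), extend it by the Kobayashi--Nomizu continuation theorem for isometries of complete, simply connected, real analytic Riemannian manifolds, upgrade to a holomorphic map by parallelism of the complex structures, and then pin down the torus automorphism by comparing lattices --- is the natural one, uses exactly the three regularity hypotheses where they are needed, and all the ingredients you invoke ($N^{\circ}$ connected open dense, deck groups acting by holomorphic isometries, equivariance of $F$ and $F'$) are available in the paper.

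One step deserves tightening, because as written it looks circular. You say that on $(N')^{\circ}$ the map $\Psi_{a}$ is ``the transformation induced by the vertical translation of $TM$ by $a_{1}E_{1}+\dots+a_{n}E_{n}$'', and then for integer $a=k$ you deduce $k_{1}E_{1}+\dots+k_{n}E_{n}\in\mathcal{L}'$ from $\Psi_{k}=\textup{id}$. But for $a\in\mathbb{T}^{n}$ this ``induced transformation'' is not well defined until one knows that integer translations $u\mapsto u+k\cdot E$ descend to the identity on $\textup{Tub}(\mathcal{L}')$ --- which is precisely the lattice inclusion you are trying to prove. The fix is routine but should be spelled out: work with real representatives, defining $S_{t}:=F'\circ\overline{\tau_{t\cdot E}}\circ (F')^{-1}$ for $t\in\mathbb{R}^{n}$, where $\overline{\tau_{t\cdot E}}$ is the descent to $\textup{Tub}(\mathcal{L}')$ of the vertical translation by $t\cdot E$ (this descends because vertical translations by parallel fields commute with $\Gamma(\mathcal{L}')$). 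The local computation through $F'$ gives $\Psi_{[t]}=S_{t}$ only for $t$ near $0$ and on a small open set; one then extends to all of $(N')^{\circ}$ by rigidity of local isometries on the connected manifold $(N')^{\circ}$, and to all $t\in\mathbb{R}^{n}$ using that both $t\mapsto\Psi_{[t]}$ and $t\mapsto S_{t}$ are homomorphisms on $\mathbb{R}^{n}$ and that a neighborhood of $0$ generates $\mathbb{R}^{n}$. Only after this does setting $t=k\in\mathbb{Z}^{n}$ yield $S_{k}=\Psi_{[k]}=\textup{id}$, hence $k\cdot E\in\mathcal{L}'$, and the rest of your argument (symmetry, $A\in GL_{n}(\mathbb{Z})$, density of $(N')^{\circ}$ to get $\phi\circ\Phi_{a}=\Phi'_{\rho(a)}\circ\phi$ everywhere) goes through as you describe.
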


	A connected dually flat manifold $(M,h,\nabla)$ is said to be \textbf{toric} if it has a regular torification $N$. 
	In this case, we will often refer to $N$ as ``the regular torification of $M$", keeping in mind that it is 
	defined up to equivariant K\"{a}hler isomorphisms and reparametrizations of the torus. 


\section{Proof of Theorem \ref{nfekwnkefnk}}\label{ncekndwknkn}

	Let the setup be as described at the beginning of this article. We use $\mathcal{P}(\Omega)$ 
	to denote the set of all probability functions on $\Omega=\{x_{0},...,x_{m}\}$. Thus, if 
	$p\in \mathcal{P}(\Omega)$, then $p(x)\geq 0$ for all $x\in \Omega$ and $\sum_{x\in \Omega}p(x)=1$.
	We use $\mathbb{E}_{p}(X)=\sum_{x\in \Omega}X(x)p(x)$ to denote the expectation of the 
	random variable $X:\Omega\to \mathbb{R}$ with respect to $p\in \mathcal{P}(\Omega)$.

	Recall that the elements of $\mathcal{E}$ are of the form 
	$p_{\theta}(x)=\textup{exp}\big\{C(x)+F_{1}(x)\theta_{1}+...+F_{n}(x)\theta_{n}-\psi(\theta)\big\}$, 
	where $x\in \Omega$, $\theta=(\theta_{1},...,\theta_{n})\in \mathbb{R}^{n}$ and $C,F_{1},...,F_{n}$ are 
	functions on $\Omega$. Let $F:\Omega\to \mathbb{R}^{n}$ be the map defined by $F(x)=(F_{1}(x),...,F_{n}(x))$. 
	The proof of Theorem \ref{nfekwnkefnk} heavily relies on the properties of 
	the convex hull of the finite set $\{F(x)\in \mathbb{R}^{n}\,\,\vert\,\,
	x\in \Omega\}$, 
	which we will denote by $\mathcal{M}.$ In the statistical litterature, $\mathcal{M}$ is known 
	as the \textit{marginal polytope}; it is a convex polytope (by definition) 
	with the following properties (see \cite{Martin}):
	\begin{itemize}
	\item $\mathcal{M}=\{(\mathbb{E}_{p}(F_{1}),...,\mathbb{E}_{p}(F_{n}))\in 
		\mathbb{R}^{n}\,\,\big\vert\,\,p\in \mathcal{P}(\Omega)\}\subset \mathbb{R}^{n}$.
	\item The topological interior of $\mathcal{M}$ is given by $\mathcal{M}^{\circ}=
		\{(\mathbb{E}_{p}(F_{1}),...,\mathbb{E}_{p}(F_{n}))\in 
		\mathbb{R}^{n}\,\,\big\vert\,\,p\in \mathcal{E}\}$. 
	\end{itemize}
	We now proceed with the proof of Theorem \ref{nfekwnkefnk}, which we divide into two parts.

\subsection{Momentum map and marginal polytope.} Because $N$ is the torification of the exponential 
	family $\mathcal{E}$, the momentum map $\moment:N\to \mathbb{R}^{n}$ has a very specific form on the set $N^{\circ}$ 
	of points in $N$ where the torus action $\Phi$ is free (see \cite{molitor-toric,molitor-spectral}). 
	Specifically, there is a surjective Riemannian submersion $\kappa:N^{\circ}\to \mathcal{E}$ 
	and a bijective affine map $A:\mathbb{R}^{n}\to \mathbb{R}^{n}$ such that on $N^{\circ}$, 
	\begin{eqnarray}\label{nfekdckdkjnwkenfk}
		\moment(x)=A(\mathbb{E}_{\kappa(x)}(F_{1}),...,\mathbb{E}_{\kappa(x)}(F_{n})).
	\end{eqnarray}
	Since $\kappa$ is surjective, we have $\moment(N^{\circ})=A(\mathcal{M}^{\circ})$, where 
	$\mathcal{M}^{\circ}$ is the topological interior of the marginal polytope $\mathcal{M}$. 

%
%
%
%
%

	For the next lemma, recall that $\moment$ is closed by hypothesis. This implies that
	$\moment(\overline{X})=\overline{\moment(X)}$ for all subset $X$ of $N$, 
	where $\overline{X}$ denotes the closure of $X$ in $N$.

\begin{lemma}\label{ndkknendkdnknk}
	We have $\moment(N)=A(\mathcal{M})$. 
\end{lemma}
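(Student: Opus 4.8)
The plan is to prove the set equality $\moment(N)=A(\mathcal{M})$ by a double inclusion, exploiting the closedness hypothesis on $\moment$ together with the density of $N^{\circ}$ in $N$ and the topological relationship between $\mathcal{M}$ and $\mathcal{M}^{\circ}$. The starting observation is equation~\eqref{nfekdckdkjnwkenfk}, which already gives $\moment(N^{\circ})=A(\mathcal{M}^{\circ})$. Since $\mathcal{E}$ is assumed full, the marginal polytope $\mathcal{M}$ has nonempty interior in $\mathbb{R}^{n}$ (this is where fullness enters: it guarantees $\mathcal{M}$ is genuinely $n$-dimensional, so that $\mathcal{M}^{\circ}\neq\emptyset$ and $\overline{\mathcal{M}^{\circ}}=\mathcal{M}$). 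Because $A$ is a bijective affine map, it is a homeomorphism of $\mathbb{R}^{n}$, and therefore $A(\overline{\mathcal{M}^{\circ}})=\overline{A(\mathcal{M}^{\circ})}$ and $A(\mathcal{M})=\overline{A(\mathcal{M}^{\circ})}$.

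First I would establish the inclusion $A(\mathcal{M})\subseteq\moment(N)$. Recall from Section~\ref{nfkenkfejdefdknkn} that $N^{\circ}$ is open and dense in $N$, so $\overline{N^{\circ}}=N$. Applying the closedness hypothesis in the form $\moment(\overline{X})=\overline{\moment(X)}$ with $X=N^{\circ}$ gives
\begin{eqnarray*}
	\moment(N)=\moment(\overline{N^{\circ}})=\overline{\moment(N^{\circ})}=\overline{A(\mathcal{M}^{\circ})}=A(\overline{\mathcal{M}^{\circ}})=A(\mathcal{M}),
\end{eqnarray*}
where the last equality uses fullness to identify $\overline{\mathcal{M}^{\circ}}=\mathcal{M}$. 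In fact this single chain of equalities already yields both inclusions simultaneously, so the proof reduces to justifying each link carefully rather than treating the two inclusions separately.

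The key steps, then, are: (1) invoke density $\overline{N^{\circ}}=N$, which is guaranteed by \cite[Corollary B.48]{Guillemin} as cited in the torification setup; (2) apply the hypothesis that $\moment$ is closed, in the sharpened form $\moment(\overline{X})=\overline{\moment(X)}$, which the paper has already noted holds for all subsets $X$; (3) substitute the explicit description $\moment(N^{\circ})=A(\mathcal{M}^{\circ})$; and (4) use that $A$ is a homeomorphism together with the topological fact $\overline{\mathcal{M}^{\circ}}=\mathcal{M}$ for a full-dimensional convex polytope.

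The main obstacle I anticipate is the final topological step $\overline{\mathcal{M}^{\circ}}=\mathcal{M}$, since this is precisely the point where the fullness hypothesis is indispensable: for a convex set that fails to be full-dimensional, the interior (taken in $\mathbb{R}^{n}$) is empty and the identity collapses. I would justify it by the standard fact that any convex set with nonempty interior equals the closure of its interior, and I would note that fullness of $\mathcal{E}$ translates, via the linear independence of $1,F_{1},\ldots,F_{n}$ and the description $\mathcal{M}=\{(\mathbb{E}_{p}(F_{1}),\ldots,\mathbb{E}_{p}(F_{n}))\mid p\in\mathcal{P}(\Omega)\}$, into $\mathcal{M}$ being full-dimensional, hence $\mathcal{M}^{\circ}\neq\emptyset$. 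The only subtlety worth flagging is to keep track of the two different ambient spaces in which closures and interiors are taken — closures of subsets of $N$ versus closures of subsets of $\mathbb{R}^{n}$ — but since $\moment$ maps into $\mathbb{R}^{n}$ and $A$ acts on $\mathbb{R}^{n}$, the bookkeeping is clean once the density and closedness facts are in place.
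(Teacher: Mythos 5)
Your proof is correct and follows essentially the same route as the paper's: density of $N^{\circ}$ plus closedness of $\moment$ gives $\moment(N)=\overline{\moment(N^{\circ})}=\overline{A(\mathcal{M}^{\circ})}$, and the closure-of-interior identity for closed convex sets finishes the argument (the paper applies it to $A(\mathcal{M})$, you apply it to $\mathcal{M}$ before pushing through the homeomorphism $A$ --- an immaterial difference). One small remark: fullness is not actually needed in this lemma, since the cited marginal-polytope property already gives $\mathcal{M}^{\circ}=\{(\mathbb{E}_{p}(F_{1}),\ldots,\mathbb{E}_{p}(F_{n}))\mid p\in\mathcal{E}\}\neq\emptyset$ from the standing linear-independence assumption on $1,F_{1},\ldots,F_{n}$; the paper reserves the fullness hypothesis for Lemma \ref{ncdnkefnkenkn}.
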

\begin{proof}
	Since $N^{\circ}$ is dense in $N$ and $\moment$ is closed, we have 
	$\moment(N)=\moment(\overline{N^{\circ}})=\overline{\moment(N^{\circ})}$. 
	From the preceding discussion, $\moment(N^{\circ})=A(\mathcal{M}^{\circ})$, 
	and because affine transformations are homeomorphisms, $A(\mathcal{M}^{\circ})=(A(\mathcal{M}))^{\circ}$. 
	Consequently, $\moment(N)=\overline{(A(\mathcal{M}))^{\circ}}$. 
	A crucial observation is that $A(\mathcal{M})$ is a closed convex set with a nonempty topological interior, 
	which implies that $\overline{(A(\mathcal{M}))^{\circ}}=A(\mathcal{M})$ 
	(see \cite{Rockafellar}, Theorem 6.3). Thus $\moment(N)=A(\mathcal{M})$.
\end{proof}

\subsection{K\"{a}hler immersion into $\mathbb{P}^{m}$.} 
	A second crucial element in the proof of Theorem \ref{nfekwnkefnk} is the fact that $N$ can be 
	immersed into a complex projective space. This follows from 
	a lifting procedure for affine isometric maps between toric dually flat manifolds, 
	developed in \cite{molitor-toric}. In the setup of this article, this lifting procedure 
	implies the existence of a holomorphic and isometric immersion $f:N\to \mathbb{P}^{m}$, where 
	$\mathbb{P}^{m}$ is the complex projective space of complex dimension $m$ and holomorphic sectional curvature 
	$c=1$, with the following properties:
	\begin{itemize}
	\item $f$ is equivariant in the sense that there exists 
		a Lie group homomorphism $\rho:\mathbb{T}^{n}\to \mathbb{T}^{m}$, with finite kernel, such that 
		$f\circ \Phi_{a}=\Phi'_{\rho(a)}\circ f$ for all $a\in \mathbb{T}^{n}$, where $\Phi'$ is the 
		action of $\mathbb{T}^{m}=\mathbb{R}^{m}/\mathbb{Z}^{m}$ on $\mathbb{P}^{m}$ defined by 
		$\Phi'([t],[z_{1},...,z_{m+1}])=[e^{2i\pi t_{1}}z_{1},...,e^{2i\pi t_{m}}z_{m},z_{m+1}]$ 
		(homogeneous coordinates).
	\item $K\circ f=\kappa$ on $N^{\circ}$, where $\kappa:N^{\circ}\to \mathcal{E}$ is the Riemannian submersion 
		described in the preceding section and $K:\mathbb{P}^{m}\to \mathcal{P}(\Omega)$ is the 
		surjective map defined by 
		$K([z_{1},...,z_{m+1}])(x_{k})=\tfrac{|z_{k+1}|^{2}}{|z_{1}|^{2}+...+|z_{m+1}|^{2}}$, $x_{k}\in \Omega$.
	\end{itemize}
	The $m$-simplex $\Delta_{m}\subset \mathbb{R}^{m}$, which plays a key role in Theorem \ref{nfekwnkefnk}, originates 
	from the torus action $\Phi'$ on $\mathbb{P}^{m}$. Indeed, $\Phi'$ is Hamiltonian with momentum map 
	$\moment':\mathbb{P}^{m}\to \mathbb{R}^{m}$ given by $\moment'=\alpha\circ K$, 
	where $\alpha:\mathcal{P}(\Omega)\to \mathbb{R}^{m}$ is defined by 
	$\alpha(p)=-4\pi (p(x_{0}),...,p(x_{m-1}))$, and thus $\moment'(\mathbb{P}^{m})=\alpha(\mathcal{P}(\Omega))=-4\pi \Delta_{m}$.

	Let $T:\mathbb{R}^{m}\to \mathbb{R}^{n}$ be the unique linear map that satisfies 
	$\langle T(u),v\rangle=\langle u,\rho_{*_{e}}(v)\rangle$ for all $u\in \mathbb{R}^{m}$ and 
	$v\in \mathbb{R}^{n}$, where $\langle\,,\, \rangle$ denotes the Euclidean pairing on both 
	$\mathbb{R}^{n}$ and $\mathbb{R}^{m}$. Here we identify the Lie algebra of 
	$\mathbb{T}^{n}$ with $\mathbb{R}^{n}$ via the derivative at 
	zero of the quotient map $\mathbb{R}^{n}\to \mathbb{R}^{n}/\mathbb{Z}^{n}=\mathbb{T}^{n}$, 
	which allows us to regard $\rho_{*_{e}}$ as a map from $\mathbb{R}^{n}$ to $\mathbb{R}^{m}$. 

\begin{lemma}
	The matrix representation of $T$ with respect to the standard bases of $\mathbb{R}^{m}$ and $\mathbb{R}^{n}$ 
	has integer entries.
\end{lemma}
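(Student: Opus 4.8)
The plan is to recognize $T$ as the transpose of $\rho_{*_e}$ and then to exploit the fact that $\rho$ descends to a homomorphism of tori, which forces $\rho_{*_e}$ to preserve the standard integer lattices. Concretely, the defining relation $\langle T(u),v\rangle=\langle u,\rho_{*_e}(v)\rangle$ says exactly that $T=\transposee{(\rho_{*_e})}$ with respect to the Euclidean inner products on $\mathbb{R}^{m}$ and $\mathbb{R}^{n}$. Since the transpose of an integer matrix is again an integer matrix, it suffices to prove that the matrix of $\rho_{*_e}:\mathbb{R}^{n}\to\mathbb{R}^{m}$ has integer entries.

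To do this, I would first make the identification of $\rho_{*_e}$ with a genuine linear lift explicit. Write $q_{n}:\mathbb{R}^{n}\to\mathbb{T}^{n}$ and $q_{m}:\mathbb{R}^{m}\to\mathbb{T}^{m}$ for the quotient maps, whose kernels are precisely $\mathbb{Z}^{n}$ and $\mathbb{Z}^{m}$. Because $\mathbb{R}^{n}$ is simply connected, the homomorphism $\rho\circ q_{n}:\mathbb{R}^{n}\to\mathbb{T}^{m}$ lifts uniquely through the covering $q_{m}$ to a Lie group homomorphism $\tilde{\rho}:\mathbb{R}^{n}\to\mathbb{R}^{m}$ with $q_{m}\circ\tilde{\rho}=\rho\circ q_{n}$ and $\tilde{\rho}(0)=0$. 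A continuous group homomorphism between vector groups is automatically linear, and, under the identifications of $\textup{Lie}(\mathbb{T}^{n})$ and $\textup{Lie}(\mathbb{T}^{m})$ with $\mathbb{R}^{n}$ and $\mathbb{R}^{m}$ via $(q_{n})_{*_{0}}$ and $(q_{m})_{*_{0}}$ fixed in the introduction, this linear map is exactly $\rho_{*_e}$.

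The key step is then to show $\rho_{*_e}(\mathbb{Z}^{n})\subseteq\mathbb{Z}^{m}$. For any $k\in\mathbb{Z}^{n}$ we have $q_{n}(k)=e$, so $q_{m}(\tilde{\rho}(k))=\rho(q_{n}(k))=\rho(e)=e$, which means $\tilde{\rho}(k)\in\ker q_{m}=\mathbb{Z}^{m}$. Applying this to the standard basis vectors $e_{1},\dots,e_{n}\in\mathbb{Z}^{n}$ shows that every column of the matrix of $\rho_{*_e}$ is an integer vector; hence $\rho_{*_e}$ has integer entries, and so does $T=\transposee{(\rho_{*_e})}$ with respect to the standard bases.

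The only genuinely delicate point, and the one I would state carefully rather than gloss over, is the identification in the second paragraph: that the uniquely determined smooth lift $\tilde{\rho}$ coincides with the differential $\rho_{*_e}$ once the Lie algebras are identified with $\mathbb{R}^{n}$ and $\mathbb{R}^{m}$ as in the paper's conventions. This is the standard correspondence between homomorphisms of real tori and integer matrices; note that the finiteness of $\ker\rho$ plays no role for integrality (it only encodes injectivity of $\tilde{\rho}$), so I would not invoke it here. Everything else is routine.
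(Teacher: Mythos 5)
Your proposal is correct and takes essentially the same route as the paper: both reduce the claim to integrality of $\rho_{*_e}$ (since $T$ is its Euclidean transpose) and obtain that from the identity $q_m\circ\rho_{*_e}=\rho\circ q_n$, i.e.\ $\rho([t])=[\rho_{*_e}t]$, which forces $\rho_{*_e}(\mathbb{Z}^{n})\subseteq\mathbb{Z}^{m}$. The only difference is how that identity is justified --- the paper cites naturality of the Lie-group exponential map (which for tori is the quotient map), while you reconstruct it by lifting $\rho\circ q_n$ through the covering $q_m$ and identifying the linear lift with $\rho_{*_e}$ --- and these two justifications are interchangeable; your side remark that finiteness of $\ker\rho$ is irrelevant here is also accurate.
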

	The lemma follows from basic facts on Lie group theory that we briefly recall now. 
	If $\rho:G_{1}\to G_{2}$ is a Lie group homomorphism, then $\rho\circ \exp_{G_{1}}=\textup{exp}_{G_{2}}\circ \rho_{*_{e}}$, 
	where $\textup{exp}_{G_{i}}:\textup{Lie}(G_{i})\to G_{i}$ is the exponential map of $G_{i}$ 
	(see, e.g., \cite{kolk}, Lemma 1.5.1). When $G_{1}=\mathbb{T}^{n}$, 
	the exponential map is just the quotient map 
	$\textup{Lie}(\mathbb{T}^{n})=\mathbb{R}^{n}\to \mathbb{T}^{n}$, $t\mapsto [t]$, and thus, if $\rho:\mathbb{T}^{n}\to \mathbb{T}^{m}$ 
	is a Lie group homomorphism, then $\rho([t])=[\rho_{*_{e}}t]$ for all $t\in \mathbb{R}^{n}$. 
	Clearly, this forces the matrix representation of $\rho_{*_{e}}:\mathbb{R}^{n}\to \mathbb{R}^{m}$ with respect to the 
	standard bases to have integer entries. The lemma is then immediate. 

\begin{lemma}\label{nfeknwkenfk}
	There exists $C\in \mathbb{R}^{n}$ such that $\moment=T\circ \moment'\circ f+C$. 
\end{lemma}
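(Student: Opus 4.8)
The plan is to recognize this as an instance of the general principle that momentum maps transform naturally under equivariant symplectic maps and are unique up to an additive constant on a connected manifold. Concretely, I would introduce the map $\moment'' := T\circ \moment'\circ f:N\to \mathbb{R}^{n}$ and show that it is itself a momentum map for the $\mathbb{T}^{n}$-action $\Phi$ on $N$; the lemma then follows from uniqueness. Let $\omega$ and $\omega'$ denote the K\"{a}hler forms of $N$ and $\mathbb{P}^{m}$.

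First I would record two preliminary facts. Since $f$ is holomorphic and isometric it preserves the K\"{a}hler forms, so $f^{*}\omega'=\omega$; this holds even though $f$ is only an immersion, since pulling back a form is a pointwise operation. Second, differentiating the equivariance relation $f\circ \Phi_{a}=\Phi'_{\rho(a)}\circ f$ at $a=e$ in a direction $\xi\in \mathbb{R}^{n}$, and using the identity $\rho([t])=[\rho_{*_{e}}t]$ established just above, the fundamental vector fields become $f$-related: $df_{p}(\xi_{N}(p))=(\rho_{*_{e}}\xi)_{\mathbb{P}^{m}}(f(p))$ for every $p\in N$.

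Next I would verify the momentum-map condition for $\moment''$. From the defining relation $\langle T(u),v\rangle=\langle u,\rho_{*_{e}}(v)\rangle$ one gets, for each $\xi\in \mathbb{R}^{n}$, the scalar identity $(\moment'')^{\xi}=(\moment')^{\rho_{*_{e}}\xi}\circ f$. Differentiating this, and applying in turn the fact that $\moment'$ is a momentum map for $\Phi'$, the $f$-relatedness of the fundamental vector fields, and finally $f^{*}\omega'=\omega$, I would obtain $d(\moment'')^{\xi}(u)=\omega(\xi_{N},u)$ for all $u\in TN$, which is exactly the required condition. Equivariance of $\moment''$ is then immediate: because $\mathbb{T}^{n}$ and $\mathbb{T}^{m}$ are abelian their coadjoint actions are trivial, so equivariance reduces to invariance, and $\moment''\circ \Phi_{a}=T\circ \moment'\circ \Phi'_{\rho(a)}\circ f=T\circ \moment'\circ f=\moment''$ since $\moment'$ is invariant.

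Finally, both $\moment$ and $\moment''$ are momentum maps for the same Hamiltonian action on the connected manifold $N$. For each fixed $\xi$ the condition $\omega(\xi_{N},\cdot)=d(\cdot)^{\xi}$ determines the $\xi$-component up to an additive constant, so $(\moment-\moment'')^{\xi}$ is a constant depending linearly on $\xi$; hence there is a unique $C\in \mathbb{R}^{n}$ with $\moment-\moment''=C$, that is, $\moment=T\circ \moment'\circ f+C$. I expect the only genuine work to lie in the momentum-map verification of the third step, where the three identities must be chained in the correct order; the passage through the symplectic form, the infinitesimal equivariance, and the uniqueness up to a constant are all routine.
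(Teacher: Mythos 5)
Your proposal is correct and follows essentially the same route as the paper: the paper's proof consists precisely of the observation that $T\circ \moment'\circ f$ is a momentum map for $\Phi$ (stated as ``straightforward to verify'') and therefore differs from $\moment$ by a constant, since momentum maps for a torus action on a connected manifold are unique up to an additive constant. Your write-up simply supplies the details of that verification --- $f^{*}\omega'=\omega$, the $f$-relatedness of fundamental vector fields, and the pairing identity for $T$ --- all of which are carried out correctly.
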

\begin{proof}
	It is straightforward to verify that $T\circ \moment'\circ f$ 
	is a momentum map for the action $\Phi$. Consequently, it differs from $\moment$ by a constant.
\end{proof}
	
	Let $A$ be the affine transformation of $\mathbb{R}^{n}$ defined in the preceding section. 
	The following lemma is the key technical result that connects $\moment(N)$ with the simplex $\Delta_{m}$. 
	
\begin{lemma}\label{ncdnkefnkenkn}
	For every $p\in \textup{conv}(\mathcal{E})$, 
	\begin{eqnarray}\label{nfeknkenfknk}
		(T\circ \alpha)(p)+C=A(\mathbb{E}_{p}(F_{1}),...,\mathbb{E}_{p}(F_{n})).
	\end{eqnarray}
	If, in addition, $\mathcal{E}$ is full, then the equality holds for every $p\in \mathcal{P}(\Omega)$. 
\end{lemma}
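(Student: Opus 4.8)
The plan is to first establish \eqref{nfeknkenfknk} on $\mathcal{E}$ itself by chaining together the structural facts already recorded, then to upgrade it to $\mathrm{conv}(\mathcal{E})$ and finally to all of $\mathcal{P}(\Omega)$, using that both sides are affine in $p$.

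First I would fix $p\in\mathcal{E}$ and use the surjectivity of the Riemannian submersion $\kappa:N^{\circ}\to\mathcal{E}$ to choose $x\in N^{\circ}$ with $\kappa(x)=p$. Evaluating \eqref{nfekdckdkjnwkenfk} at $x$ gives $\moment(x)=A(\mathbb{E}_{p}(F_{1}),\ldots,\mathbb{E}_{p}(F_{n}))$, the right-hand side of \eqref{nfeknkenfknk}. On the other hand, Lemma \ref{nfeknwkenfk} gives $\moment(x)=T(\moment'(f(x)))+C$, and since $\moment'=\alpha\circ K$ combined with the relation $K\circ f=\kappa$ on $N^{\circ}$ yields $\moment'(f(x))=\alpha(K(f(x)))=\alpha(\kappa(x))=\alpha(p)$, we obtain $\moment(x)=(T\circ\alpha)(p)+C$. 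Comparing the two expressions for $\moment(x)$ proves \eqref{nfeknkenfknk} for every $p\in\mathcal{E}$.

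Next I would observe that both sides of \eqref{nfeknkenfknk} are restrictions of affine maps defined on the whole hyperplane $H$ of functions on $\Omega$ summing to $1$. Indeed $\alpha(p)=-4\pi(p(x_{0}),\ldots,p(x_{m-1}))$ is linear in $p$ and $T$ is linear, so the left-hand side is affine in $p$; and each $\mathbb{E}_{p}(F_{i})=\sum_{x\in\Omega}F_{i}(x)p(x)$ is linear in $p$ while $A$ is affine, so the right-hand side is affine in $p$. Two affine maps that agree on a set agree on its affine hull, and hence on any subset of that hull. Since they agree on $\mathcal{E}$ by the previous paragraph, they agree on $\mathrm{aff}(\mathcal{E})\supseteq\mathrm{conv}(\mathcal{E})$, which gives the first assertion.

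Finally, for the fullness statement I would unwind the definition of \emph{full}. Regarding $\mathcal{P}(\Omega)$ as the standard simplex inside $H$, the hyperplane $H$ has dimension $m$, so $\mathrm{conv}(\mathcal{E})$ having nonempty interior relative to $H$ forces $\mathrm{aff}(\mathcal{E})=H$. The two affine maps, agreeing on $\mathcal{E}$, then agree on all of $H$, and in particular on $\mathcal{P}(\Omega)\subset H$, yielding the second assertion. The points requiring care are the bookkeeping in the first paragraph — ensuring the torification identities are composed in the correct order so that $\moment'\circ f$ really reduces to $\alpha$ evaluated at $p$ — together with the translation of the topological notion of fullness into the algebraic statement $\mathrm{aff}(\mathcal{E})=H$; once these are in place, the affineness argument is routine.
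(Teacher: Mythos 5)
Your proof is correct, and its first two steps coincide with the paper's: you compose \eqref{nfekdckdkjnwkenfk}, Lemma \ref{nfeknwkenfk} and the identity $\moment'\circ f=\alpha\circ K\circ f=\alpha\circ\kappa$ to get \eqref{nfeknkenfknk} on $\mathcal{E}$, and then pass to $\mathrm{conv}(\mathcal{E})$ by affinity of both sides in $p$ (the paper calls this step ``immediate''; your affinity observation is precisely why it is). Where you genuinely diverge is the fullness step. The paper treats \eqref{nfeknkenfknk} as an equality of two continuous functions on the polytope $\mathcal{P}(\Omega)$ that are real-analytic on its interior and coincide on a nonempty open subset of $H$, and concludes by analytic continuation together with continuity up to the boundary. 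You instead observe that both sides extend to affine maps on the hyperplane $H$, that fullness forces $\mathrm{aff}(\mathcal{E})=H$ (a relatively open subset of $H$ inside $\mathrm{conv}(\mathcal{E})$ has affine hull $H$, and taking convex hulls does not change affine hulls), and that the coincidence set of two affine maps is an affine subspace, hence contains $\mathrm{aff}(\mathcal{E})=H\supset\mathcal{P}(\Omega)$. Your route is more elementary: it needs no analyticity, no connectedness of the interior, and no closure argument. The paper's argument, on the other hand, is the one that would survive if the two sides of \eqref{nfeknkenfknk} were merely continuous and real-analytic on the interior rather than affine; in the situation at hand, where affinity is manifest from the formulas for $\alpha$, $T$, $\mathbb{E}_{p}(F_{i})$ and $A$, your argument is leaner and equally rigorous.
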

\begin{proof}
	It follows directly from \eqref{nfekdckdkjnwkenfk}, Lemma \ref{nfeknwkenfk} and the formula 
	$\moment'\circ f= \alpha\circ K\circ f=\alpha \circ \kappa$ that 
	\eqref{nfeknkenfknk} holds for all $p\in \mathcal{E}$. The fact that it also holds for all 
	$p\in \textup{conv}(\mathcal{E})$ is then immediate. If $\mathcal{E}$ is full, then we can interpret 
	\eqref{nfeknkenfknk} as an equality of two continuous functions on a convex polytope, real-analytic on its interior, 
	which coincide on an open set. Therefore they coincide everywhere.
\end{proof}

	We now proceed with the proof of Theorem \ref{nfekwnkefnk}. Suppose $\mathcal{E}$ is full. 
	Since $\alpha(\mathcal{P}(\Omega))=-4\pi \Delta_{m}$ by the preceding discussion, 
	Lemma \ref{ncdnkefnkenkn} implies that $-4\pi T(\Delta_{m})+C=A(\mathcal{M})$, 
	where $\mathcal{M}$ is the marginal polytope. By Lemma \ref{ndkknendkdnknk}, $A(\mathcal{M})=\moment(N)$. 
	Therefore, $-4\pi T(\Delta_{m})+C=\moment(N)$. This completes the proof of Theorem \ref{nfekwnkefnk}.

\begin{example}
	If $N=\mathbb{P}_{1}(\tfrac{1}{n})$, as discussed in Example \ref{nekdnwknfkdn}, is the complex projective space, 
	regarded as the regular torification of the family of binomial distributions on $\Omega=\{0,...,n\}$, 
	then the immersion from $N$ into a complex projective space (obtained from the lifting procedure discussed above) 
	is the Veronese embedding $f:\mathbb{P}_{1}(\tfrac{1}{n})\to \mathbb{P}_{n}(1)$, given by (see \cite{molitor-toric}):
	\begin{eqnarray*}
		f([z_{0},z_{1}])=\text{$\big[z_{0}^{n},...,\textstyle \binom{n}{k}^{1/2}z_{0}^{n-k}z_{1}^{k},...,z_{1}^{n}\big].$}
	\end{eqnarray*}
	The corresponding group homomorphism $\rho:\mathbb{T}\to \mathbb{T}^{n}$ is given by $\rho([t])=[nt,(n-1)t,...,2t,t]$, 
	$t\in \mathbb{R}$. Thus, $T=[n\,n-1\,...\,1]$. 
\end{example}

\begin{footnotesize}\bibliography{bibtex}\end{footnotesize}

\begin{thebibliography}{10}

\bibitem{Amari-Nagaoka}
Shun-ichi Amari and Hiroshi Nagaoka.
\newblock {\em Methods of information geometry}, volume 191 of {\em
  Translations of Mathematical Monographs}.
\newblock American Mathematical Society, Providence, RI; Oxford University
  Press, Oxford, 2000.
\newblock Translated from the 1993 Japanese original by Daishi Harada.

\bibitem{Atiyah}
M.~F. Atiyah.
\newblock Convexity and commuting {H}amiltonians.
\newblock {\em Bull. London Math. Soc.}, 14(1):1--15, 1982.

\bibitem{Audin}
Mich\`ele Audin.
\newblock {\em Torus actions on symplectic manifolds}, volume~93 of {\em
  Progress in Mathematics}.
\newblock Birkh\"{a}user Verlag, Basel, revised edition, 2004.

\bibitem{Delzant}
Thomas Delzant.
\newblock Hamiltoniens p\'{e}riodiques et images convexes de l'application
  moment.
\newblock {\em Bull. Soc. Math. France}, 116(3):315--339, 1988.

\bibitem{Dombrowski}
Peter Dombrowski.
\newblock On the geometry of the tangent bundle.
\newblock {\em J. Reine Angew. Math.}, 210:73--88, 1962.

\bibitem{kolk}
J.~J. Duistermaat and J.~A.~C. Kolk.
\newblock {\em Lie groups}.
\newblock Universitext. Springer-Verlag, Berlin, 2000.

\bibitem{Guillemin82}
V.~Guillemin and S.~Sternberg.
\newblock Convexity properties of the moment mapping.
\newblock {\em Invent. Math.}, 67(3):491--513, 1982.

\bibitem{Guillemin}
Victor Guillemin, Viktor Ginzburg, and Yael Karshon.
\newblock {\em Moment maps, cobordisms, and {H}amiltonian group actions},
  volume~98 of {\em Mathematical Surveys and Monographs}.
\newblock American Mathematical Society, Providence, RI, 2002.
\newblock Appendix J by Maxim Braverman.

\bibitem{molitor-spectral}
Mathieu Molitor.
\newblock Geometric spectral theory for k\"ahler functions, 2023.

\bibitem{molitor-toric}
Mathieu Molitor.
\newblock K\"{a}hler toric manifolds from dually flat spaces, arXiv:2109.04839,
  2021.

\bibitem{Rockafellar}
R.~Tyrrell Rockafellar.
\newblock {\em Convex Analysis}.
\newblock Princeton Landmarks in Mathematics and Physics. Princeton University
  Press, 1970.

\bibitem{Martin}
Martin~J. Wainwright and Michael~I. Jordan.
\newblock Graphical models, exponential families, and variational inference.
\newblock {\em Found. Trends Mach. Learn.}, 1(1-2):1--305, 2008.

\end{thebibliography}
\end{document}